\theoremstyle{plain}
\theoremstyle{definition}
\newtheorem{theorem}{Theorem}[section]
\newtheorem{lemma}[theorem]{Lemma}
\newtheorem{example}[theorem]{Example}
\newtheorem{note}[theorem]{Note}
\theoremstyle{remark}
\numberwithin{equation}{section}
\newcommand{\SP}{\: \: \: }
\title{Closed graph property and Khalimsky spaces}
\author[M. Pourattar, F. Ayatollah Zadeh Shirazi, M. R. Mardanbeigi]{Mehrnaz Pourattar, Fatemah Ayatollah Zadeh Shirazi, Mohammad Reza Mardanbeigi}
\begin{document}
\begin{abstract}
In the following text for Khalimsky $n-$dimensional space $\mathcal{K}^n$ we show self--map
$f:\mathcal{K}^n\to\mathcal{K}^n$ has closed graph if and only if there exist integers $\lambda_1,\ldots,\lambda_n$ 
such that $f$ is a constant map with value $(2\lambda_1,\cdots,2\lambda_n)$. We also show each self--map on Khalimsky circle and
Khalimsky sphere which has closed graph is a constant map. The text is motivated by examples. 
\end{abstract}
\maketitle
\noindent {\small {\bf 2020 Mathematics Subject Classification:}  54C35, 54D05  \\
{\bf Keywords:}}  Alexandroff space, Closed graph, Digital topology, Khalimsky spaces.
\section{Introduction}
\noindent Alexandroff spaces introduced as ``Diskrete R\"{a}ume'' by P. Alexandroff in \cite{alexandroff}.
One may consider different sub--classes of Alexandroff spaces like finite topological 
\linebreak
spaces~\cite{finite},
functional Alexandroff topological spaces~\cite{functional}, locally finite topological spaces~\cite{one, locally},
$n-$dimensional Khalimsky spaces~\cite{semi, one}, etc..  Interaction between Alexandroff topological spaces 
and other mathemathical structures has been studied in different texts 
(see e.g.~\cite{mesal}). 
\\
Closed graph property has been studied in different categories of topological spaces:
Banach spaces, complete metric spaces, and general topological spaces, etc. (see e.g.~\cite{close1, close2, close3, close4, close5}. 
\\
Let's recall that in topological spaces $X,Y$ and map $f:X\to Y$ we call $G_f:=\{
(x,f(x)):x\in X\}$ the graph of $f$ and we say $f:X\to Y$ has closed graph if $G_f$ is a closed subset of $X\times Y$ (with product topology)~\cite{bana}.
The main aim of this text is to characterize all self--maps on 
$n-$dimensional Khalimsky spaces with closed graph.
\subsection*{Background on Alexandroff spaces}
A topological space $(X,\tau)$ or simply $X$ is an Alexandroff space if for each nonempty collection of open subsets  of $X$ like $\Gamma$, $\bigcap\Gamma$ is open too. In particular in Alexandroff space $X$ for each $x\in X$ the intersection of all open neighbourhoods of $X$ is the smallest open neighbourhood of $x$, we denote the smallest open neighbourhood of $x$ by $V_X(x)$ or simply by $V(x)$. 
\\
Moreover, product of two topological Alexandroff spaces is an Alexandroff space too. Note that if $X, Y$ are Alexandroff spaces and $(x,y)\in X\times Y$, then $V_{X\times Y}(x,y)=V_X(x)\times V_Y(y)$ (where 
$X\times Y$ is equipped with product topology). Using induction product of finitely of finitely many Alexandroff spaces is an Alexandroff space.  

\subsection*{Background on Khalimsky spaces}
Let's equip $\mathbb{Z}=\{0,\pm1,\pm2,\cdots\}$ with 
topology $\kappa$ generated by basis $\{\{2n-1,2n,2n+1\}:n\in \mathbb{Z}\}\cup \{\{2n+1\}:n\in \mathbb{Z}\}$. We call the Alexandroff topological space $(\mathbb{Z},\kappa)$ Khalimsky line and denote it by $\mathcal{K}$. For integers $a,b$ with $a\leq b$ let 
\begin{itemize}
\item[$\mathcal A$)] $[a,b]_\mathcal{K}:=\{x\in \mathbb{Z}: a\leq x \leq b\}$,
\item[$\mathcal B$)] $[a,+\infty)_\mathcal{K}:=\{x\in \mathbb{Z}: x\leq a\}$,
\item[$\mathcal C$)] $(-\infty,a]_\mathcal{K}:=\{x\in \mathbb{Z}: x\geq a\}$.
\end{itemize}
All subsets of $\mathcal{K}$ introduced in $(\mathcal{A},\mathcal{B},\mathcal{C})$ and $\varnothing,\mathcal{K}$ are intervals of Khalimsky line  $\mathcal{K}$. 
\\
For natural number $n$, we call $\mathcal{K}^n$ an $n-$dimensional Khalimsky space. 
$\mathcal{K}^2$ is called Khalimsky plane and one may consider it in digital topology too~\cite{dig}.\\
In topological space $X$ suppose $\infty \notin X$ and let $A(X):=X\cup \{\infty\}$. Equip $A(X)$ with topology 
$\{U\subseteq X: U$ is an open subset of $X\}\cup \{U\subseteq A(X):X\setminus U$ is a compact closed subset of 
$X\}$ then we call $A(X)$ one point compactification or Alexandroff compactification of $X$~\cite{see}.
\\
We call one point compactification of Khalimsy line as khalimsky circle and also one point compactification of Khalimsky plane as Khalimsky sphere (see e.g., \cite{semi, one}).
\section{when sel--map $f:\mathcal{K}^n\to \mathcal{K}^n$ has closed graph?} 
\noindent In this section we investigate all cases which a self--map on $\mathcal{K}^n$ (resp.
$A(\mathcal{K}^n)$) has closed graph.
\begin{lemma}\label{salam10}
For each $x,y\in\mathcal{K}^n$ there exists finite sequence $x=x_1,x_2,\ldots,x_p=y\in\mathcal{K}^n$ such that
for each $i\in\{1,\ldots,p-1\}$ we have $V_{\mathcal{K}^n}(x_i)\cap V_{\mathcal{K}^n}(x_{i+1})\neq\varnothing$.
\end{lemma}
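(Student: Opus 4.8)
The plan is to reduce the statement to the one-dimensional case and then build the chain one coordinate at a time. First I would work out the minimal neighbourhoods on $\mathcal{K}$ directly from the given basis: for $m\in\mathbb{Z}$ one has $V_{\mathcal{K}}(m)=\{m\}$ when $m$ is odd and $V_{\mathcal{K}}(m)=\{m-1,m,m+1\}$ when $m$ is even. Hence for any two consecutive integers $m,m+1$, whichever of the two is odd lies in both $V_{\mathcal{K}}(m)$ and $V_{\mathcal{K}}(m+1)$, so $V_{\mathcal{K}}(m)\cap V_{\mathcal{K}}(m+1)\neq\varnothing$. Consequently, given $a,b\in\mathbb{Z}$ with, say, $a\leq b$, the finite sequence $a,a+1,\ldots,b$ (i.e. the elements of $[a,b]_{\mathcal{K}}$ listed in order) witnesses the claim on $\mathcal{K}$; the case $a>b$ follows by reversing, and $a=b$ is trivial.

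Next I would record the product observation. Since the background gives $V_{X\times Y}(x,y)=V_X(x)\times V_Y(y)$ and, by induction, $V_{\mathcal{K}^n}(z_1,\ldots,z_n)=V_{\mathcal{K}}(z_1)\times\cdots\times V_{\mathcal{K}}(z_n)$, two points $z,w\in\mathcal{K}^n$ satisfy $V_{\mathcal{K}^n}(z)\cap V_{\mathcal{K}^n}(w)\neq\varnothing$ if and only if $V_{\mathcal{K}}(z_i)\cap V_{\mathcal{K}}(w_i)\neq\varnothing$ for every $i\in\{1,\ldots,n\}$. In particular this holds whenever $z$ and $w$ agree in all coordinates except possibly one, and in that remaining coordinate their values are equal or consecutive integers.

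To prove the lemma for general $n$, I would write $x=(a_1,\ldots,a_n)$ and $y=(b_1,\ldots,b_n)$ and pass from $x$ to $y$ one coordinate at a time: first use the one-dimensional chain from $a_1$ to $b_1$ in the first coordinate while freezing the remaining coordinates at $a_2,\ldots,a_n$; then the chain from $a_2$ to $b_2$ in the second coordinate with the first coordinate now fixed at $b_1$ and the rest still $a_3,\ldots,a_n$; and so on through all $n$ coordinates. Concatenating these $n$ blocks yields the required sequence: within a block consecutive entries differ only in the active coordinate, and there by consecutive integers, so the previous paragraph applies; at the junction between two consecutive blocks the two entries are literally equal, so the condition is trivial there as well.

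I do not expect a serious obstacle; the only points needing care are the parity case-check underlying the base step and the bookkeeping that shows the concatenation remains a legitimate \emph{finite} chain (clear, since each of the $n$ blocks is finite). An alternative, less hands-on route would be to observe that the transitive closure of the relation ``$V(x)\cap V(y)\neq\varnothing$'' is an equivalence relation with open classes and then invoke connectedness of $\mathcal{K}^n$, but the explicit construction above is shorter and self-contained.
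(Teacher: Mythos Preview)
Your proof is correct and follows essentially the same route as the paper's: establish the one-dimensional case directly and then pass to $\mathcal{K}^n$ by changing one coordinate at a time using the product formula $V_{\mathcal{K}^n}=\prod V_{\mathcal{K}}$. The only cosmetic differences are that the paper steps through even integers (spacing $2$) rather than all integers in the one-dimensional chain, and phrases the coordinate-by-coordinate reduction as a formal induction on $n$ rather than iterating the coordinates explicitly.
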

\begin{proof}
Let's use induction on $n$. 
\\
If $n=1$ and  $x,y\in\mathcal{K}$ choose $z,w\in\mathbb{Z}$ such that $|2z-x|\leq1$ and $|2w-y|\leq1$, hence
$V_{\mathcal K}(x)\cap V_{\mathcal K}(2z)\neq\varnothing $ and $V_{\mathcal K}(2w)\cap V_{\mathcal K}(y)\neq\varnothing$. 
We may suppose $z\leq w$, then for $x_1=x,x_2=2z, z_3=2z+2,\ldots, x_{w-z+2}=2w,x_{w-z+3}=y$ and each $i\in\{1,\ldots,x-z+2\}$
we have 
$V_{\mathcal K}(x_i)\cap V_{\mathcal K}(x_{i+1})\neq\varnothing$.
\\
Suppose the statement is true  for $n=q\geq1$, i.e. for each $a,b\in\mathcal{K}^q$ there exists finite sequence $a=x_1,x_2,\ldots,x_p=b\in\mathcal{K}^q$ such that
for each $i\in\{1,\ldots,p-1\}$ we have $V_{\mathcal{K}^q}(x_i)\cap V_{\mathcal{K}^q}(x_{i+1})\neq\varnothing$. Choose
$x=(c,d),y=(u,v)\in\mathcal{K}\times\mathcal{K}^q(=\mathcal{K}^{q+1})$, since $c,u\in\mathcal{K}$ and $d,v\in\mathcal{K}^q$
using induction's hypothesis there exist
$c_1=c,\ldots, c_s=u\in\mathcal{K}$ and $d_1=d,\ldots,d_t=v\in\mathcal{K}^q$ such that for each $i\in\{1,\ldots,s-1\}$ and
$j\in\{1,\ldots,t-1\}$  we have $V_{\mathcal K}(c_i)\cap V_{\mathcal K}(c_{i+1})\neq \varnothing$
and $V_{{\mathcal K}^q}(d_j)\cap V_{{\mathcal K}^q}(d_{j+1})\neq \varnothing$. Let
$x_1=x=(c,d)=(c_1,d), x_2=(c_2,d),\ldots,x_s=(c_s,d)=(u,d)=(u,d_1), x_{s+1}=(u,d_2),\ldots, x_{s+t-1}=(u,d_t)=(u,v)$, then
for each $i\in\{1,\ldots,s+t-2\}$ we have $V_{\mathcal{K}^{q+1}}(x_i)\cap V_{\mathcal{K}^{q+1}}(x_{i+1})\neq\varnothing$
(use the fact that $V_{\mathcal{K}^{q+1}}(c_k,d_l)=V_{\mathcal{K}}(c_k)\times V_{\mathcal{K}^{q}}(d_l)$.
\end{proof}
\begin{lemma}\label{salam20}
In topological space $X$ if $f:X\to X$ has closed graph, then for each $x\in X$, and $y\in\overline{\{x\}}$ we have $f(x)=f(y)$, in
particular if $x$ has the smallest open neighbourhood like $V$, then for each $z\in V$, $f(x)=f(z)$.
\end{lemma}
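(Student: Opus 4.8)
The plan is to exploit the closedness of $G_f$ together with the elementary fact that in a product space the closure of a singleton satisfies $\overline{\{(a,b)\}}=\overline{\{a\}}\times\overline{\{b\}}$ (equivalently $\overline{A\times B}=\overline{A}\times\overline{B}$), which follows immediately from the description of basic open sets $U\times W$ in $X\times X$. First I would fix $x\in X$ and $y\in\overline{\{x\}}$ and examine the point $(y,f(x))\in X\times X$. Since $f(x)$ always lies in $\overline{\{f(x)\}}$, the point $(y,f(x))$ belongs to $\overline{\{x\}}\times\overline{\{f(x)\}}$, which by the product-closure identity is exactly $\overline{\{(x,f(x))\}}$.

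Next I would note that $(x,f(x))\in G_f$, hence $\overline{\{(x,f(x))\}}\subseteq\overline{G_f}=G_f$, the last equality being precisely the hypothesis that $f$ has closed graph. Combining the two steps gives $(y,f(x))\in G_f$; but the only element of $G_f$ whose first coordinate is $y$ is $(y,f(y))$, so $f(x)=f(y)$, which is the assertion.

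For the ``in particular'' clause I would first check the specialization relation: if $V$ is the smallest open neighbourhood of $x$ and $z\in V$, then every open neighbourhood of $x$ contains $V$ and hence contains $z$, so $x\in\overline{\{z\}}$. Then I apply the first part with the roles of the two points exchanged, i.e. to the pair $z$ and $x\in\overline{\{z\}}$, to conclude $f(z)=f(x)$.

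I do not anticipate a real obstacle; the argument is essentially one line once the product-closure identity is in hand. The only subtlety worth flagging explicitly is the direction of the specialization order — membership $z\in V(x)$ yields $x\in\overline{\{z\}}$ and not the reverse — so the ``in particular'' statement is the general statement applied to $z$ rather than to $x$.
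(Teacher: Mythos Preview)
Your proof is correct and follows essentially the same route as the paper: the chain $(y,f(x))\in\overline{\{x\}}\times\overline{\{f(x)\}}=\overline{\{(x,f(x))\}}\subseteq\overline{G_f}=G_f$ is exactly the paper's argument, and the ``in particular'' clause is handled identically by noting that $z\in V(x)$ gives $x\in\overline{\{z\}}$ and then applying the first part with the roles swapped. Your explicit remark on the direction of the specialization order is apt and matches the paper's handling.
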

\begin{proof}
Suppose $f:X\to X$ has closed graph, $x\in X$ and $y\in\overline{\{x\}}$. Thus
$(y,f(x))\in\overline{\{x\}}\times\overline{\{f(x)\}}=\overline{\{(x,f(x))\}}\subseteq\overline{G_f}=G_f=\{w,f(w)):w\in X\}$, which leads to
$f(y)=f(x)$. In order to complete the proof note that if $x$ has the smallest open neighbourhood like $V$, then for each $z\in V$
we have $x\in\overline{\{z\}}$, therefore $f(z)=f(x)$.
\end{proof}
\begin{theorem}\label{salam30}
If $n\geq1$ and $X$ is one of the following spaces:
\begin{itemize}
\item an interval in $\mathcal{K}$, 
\item $n-$dimensional Khalimsky space $\mathcal{K}^n$,
\item $A(\mathcal{K}^n)$,
\end{itemize}
then each $f:X\to X$ with closed graph is a constant map.
\end{theorem}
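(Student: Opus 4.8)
The plan is to reduce all three cases to one principle, furnished by Lemmas~\ref{salam10} and~\ref{salam20}: a closed-graph self-map is constant on every minimal open neighbourhood, and overlapping minimal neighbourhoods therefore carry the same value. Indeed, if $f\colon X\to X$ has closed graph, $x$ has minimal open neighbourhood $V_X(x)$, and $w\in V_X(x)\cap V_X(x')$, then Lemma~\ref{salam20} gives $f(x)=f(w)=f(x')$; so whenever two points of $X$ can be joined by a finite chain $x=x_1,\dots,x_p=y$ with $V_X(x_i)\cap V_X(x_{i+1})\neq\varnothing$ for all $i$, we get $f(x)=f(y)$. Thus for each of the three spaces it suffices to produce such chains between arbitrary points (with, in the compactification case, one extra argument for the adjoined point).

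For $X=\mathcal{K}^n$ this is immediate: Lemma~\ref{salam10} supplies the chains and the principle above yields that $f$ is constant. For $X$ an interval $I$ of $\mathcal{K}$ the only point needing care is that $I$ is a subspace, so its minimal neighbourhoods are the traces $V_{\mathcal{K}}(x)\cap I$; the claim is that consecutive integers $m,m+1$ both lying in $I$ always have overlapping minimal neighbourhoods in $I$. This is the usual even/odd split: if $m$ is even then $m+1\in V_{\mathcal{K}}(m)$, and if $m+1$ is even then $m\in V_{\mathcal{K}}(m+1)$; in both cases the overlapping integer lies between $x$ and $y$, hence in $I$ since $I$ is an interval. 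So for $x\le y$ in $I$ the sequence $x,x+1,\dots,y$ is an admissible chain, and $f$ is constant on $I$. (The cases $I=\varnothing$ and $I$ a singleton are trivial.)

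For $X=A(\mathcal{K}^n)$ I would first observe that $\mathcal{K}^n$ is an open subspace of $A(\mathcal{K}^n)$, so $V_{A(\mathcal{K}^n)}(x)=V_{\mathcal{K}^n}(x)$ for every $x\in\mathcal{K}^n$; combining Lemmas~\ref{salam10} and~\ref{salam20} as above then shows that $f$ is constant, with some value $c$, on $\mathcal{K}^n$. It remains to prove $f(\infty)=c$, and here I would return to the definition of closed graph rather than to Lemma~\ref{salam20}, since $\infty$ has no minimal open neighbourhood. Let $U\times W$ be any basic open neighbourhood of $(\infty,c)$ in $A(\mathcal{K}^n)\times A(\mathcal{K}^n)$; then $U=A(\mathcal{K}^n)\setminus K$ with $K$ compact and closed in $\mathcal{K}^n$. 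Since $\mathcal{K}$ is not compact and the coordinate projection $\mathcal{K}^n\to\mathcal{K}$ is continuous and onto, $\mathcal{K}^n$ is not compact, so $K\neq\mathcal{K}^n$ and we may pick $z\in\mathcal{K}^n\setminus K\subseteq U$. Then $(z,f(z))=(z,c)\in (U\times W)\cap G_f$, whence $(\infty,c)\in\overline{G_f}=G_f$, i.e.\ $f(\infty)=c$, and $f$ is constant on all of $A(\mathcal{K}^n)$.

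The genuinely new ingredient beyond the two lemmas is the handling of the point at infinity: that is exactly where Lemma~\ref{salam20} ceases to apply, and the resolution is the density argument just given, whose real content is the non-compactness of $\mathcal{K}^n$ (equivalently, that every open neighbourhood of $\infty$ meets $\mathcal{K}^n$, where $f$ is already known to be constant). I expect this to be the main obstacle; the remaining points — that $\mathcal{K}^n$ is open in $A(\mathcal{K}^n)$, that minimal neighbourhoods of a subspace are the traces of the ambient ones, and the even/odd bookkeeping for intervals — are routine.
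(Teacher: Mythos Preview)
Your proof is correct and follows essentially the same strategy as the paper: chain minimal neighbourhoods via Lemma~\ref{salam10}, propagate the constant value via Lemma~\ref{salam20}, and for $A(\mathcal{K}^n)$ conclude $f(\infty)=c$ from the density of $\mathcal{K}^n$ in its compactification (the paper writes this as the one-line closure computation $(\infty,c)\in\overline{\mathcal{K}^n\times\{c\}}\subseteq\overline{G_f}=G_f$, which is exactly your neighbourhood argument compressed). If anything you are more careful than the paper in the interval case, supplying the explicit even/odd chain within $I$ rather than simply citing Lemma~\ref{salam10}, which as stated applies only to $\mathcal{K}^n$.
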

\begin{proof} Consider $f:X\to X$ has closed  graph. \\
First suppose $X$ is an interval in $\mathcal{K}$ with at least two elements or $X$ is $\mathcal{K}^n$, then by Lemma~\ref{salam10} for each 
$x,y\in X$ there exist $x=x_1,\ldots,x_p=y\in X$ such that for each $i\in\{1,\ldots,p-1\}$ we have $V_X(x_i)\cap V_X(x_{i+1})\neq\varnothing$.
By Lemma~\ref{salam20}, $f(x)=f(y)$, which shows that $f$ is a constant map.
\\
Now Suppose $X=A(\mathcal{K}^n)$, using a similar method described in the previous paragraph the restriction of $f$ to $\mathcal{K}^n$ is constant. 
Suppose $f(x)=c$ for all $x\in\mathcal{K}^n$, then $(\infty,c)\in A(\mathcal{K}^n)\times{\{c\}}=\overline{\mathcal{K}^n\times\{c\}}
\subseteq\overline{G_f}=G_f$, hence $f(\infty)=c$ and $f$ is a constant map in this case too.
\end{proof}
\begin{note}\label{salam40}
In nonempty topological spaces $X,Y$ if constant map $\mathop{f:X\to Y}\limits_{x\mapsto c}$ has closed graph, then
$X\times\{c\}=G_f=\overline{G_f}=\overline{X\times\{c\}}=X\times\overline{\{c\}}$, hence $\overline{\{c\}}=\{c\}$ and
$c$ is a closed point of $X$. Thus constant map $\mathop{X\to Y}\limits_{x\mapsto b}$ has closed graph if and only if $b$ is a closed point
of $X$.
\end{note}
\noindent Now we are ready to establish our main theorem.
\begin{theorem}[main]\label{salam40}
Consider $n\geq1$.
\begin{itemize}
\item Suppose $X$ is an interval in $\mathcal{K}$ with at least two elements. $f:X\to X$ has closed graph if and only if there exists
	even integer $2\lambda\in X$ such that $f$ is the constant map with value $2\lambda$.
\item $f:\mathcal{K}^n\to \mathcal{K}^n$ has closed graph if and only if there exist
	even integers $2\lambda_1,\ldots,2\lambda_n$ such that $f$ is the constant map with value $(2\lambda_1,\ldots,2\lambda_n)$.
\item $f:A(\mathcal{K}^n)\to A(\mathcal{K}^n)$ has closed graph if and only if one of the following conditions occurs: 
	\begin{itemize}
	\item 
	there exist
	even integers $2\lambda_1,\ldots,2\lambda_n$ such that $f$ is the constant map with value $(2\lambda_1,\ldots,2\lambda_n)$,
	\item $f$ is the constant map with value $\infty$.
	\end{itemize}
\end{itemize}
\end{theorem}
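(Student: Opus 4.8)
The plan is to reduce the statement to the two facts already in place. By Theorem~\ref{salam30}, any $f$ with closed graph on each of these three spaces is a constant map, and by the Note above a constant self-map with value $c$ has closed graph if and only if $c$ is a closed point of the ambient space. Hence in every bullet ``$f$ has closed graph'' is equivalent to ``$f$ is the constant map with value a closed point'', and it remains only to identify the closed points of an interval of $\mathcal{K}$ with at least two elements, of $\mathcal{K}^n$, and of $A(\mathcal{K}^n)$.

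For the Khalimsky line itself, recall that $y\in\overline{\{x\}}$ exactly when $x\in V_{\mathcal{K}}(y)$, and that $V_{\mathcal{K}}(2m+1)=\{2m+1\}$ while $V_{\mathcal{K}}(2m)=\{2m-1,2m,2m+1\}$. A short computation then gives $\overline{\{2m\}}=\{2m\}$ and $\overline{\{2m+1\}}=\{2m,2m+1,2m+2\}$, so the closed points of $\mathcal{K}$ are precisely the even integers. If $X$ is an interval of $\mathcal{K}$ with at least two elements, every even integer in $X$ is still closed in $X$ (closures can only shrink in a subspace), while for an odd integer $2m+1\in X$ the interval $X$ must also contain $2m$ or $2m+2$, and that point then lies in $\overline{\{2m+1\}}\cap X$; hence the closed points of $X$ are exactly the even integers belonging to $X$, which proves the first bullet.

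For $\mathcal{K}^n$ I would use that in a finite product one has $\overline{\{(a_1,\dots,a_n)\}}=\overline{\{a_1\}}\times\cdots\times\overline{\{a_n\}}$, so $(a_1,\dots,a_n)$ is a closed point of $\mathcal{K}^n$ if and only if each $a_i$ is a closed point of $\mathcal{K}$, i.e. each $a_i$ is even; this gives the second bullet. For $A(\mathcal{K}^n)$ two things must be checked. First, $\infty$ is closed: for $x\in\mathcal{K}^n$ the set $V_{\mathcal{K}^n}(x)$ is open in $\mathcal{K}^n$, hence open in $A(\mathcal{K}^n)$, and does not contain $\infty$, so $x\notin\overline{\{\infty\}}$ and therefore $\overline{\{\infty\}}=\{\infty\}$. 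Second, a point $x\in\mathcal{K}^n$ is closed in $A(\mathcal{K}^n)$ if and only if it is a tuple of even integers: since $\mathcal{K}^n$ is a subspace of $A(\mathcal{K}^n)$ one has $\overline{\{x\}}^{A(\mathcal{K}^n)}\cap\mathcal{K}^n=\overline{\{x\}}^{\mathcal{K}^n}$, so if some coordinate of $x$ is odd then $x$ is not closed; and if every coordinate of $x$ is even then $\{x\}$ is a compact and closed subset of $\mathcal{K}^n$, so $A(\mathcal{K}^n)\setminus\{x\}$ is an open neighbourhood of $\infty$ not containing $x$, whence $\infty\notin\overline{\{x\}}$ and $\overline{\{x\}}^{A(\mathcal{K}^n)}=\{x\}$. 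Consequently the closed points of $A(\mathcal{K}^n)$ are $\infty$ together with the tuples of even integers, which is the third bullet.

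Combining these descriptions of the closed points with Theorem~\ref{salam30} and the Note above yields all three equivalences. I expect the only delicate point to be the bookkeeping for $A(\mathcal{K}^n)$: one must keep the topology of $\mathcal{K}^n$ separate from that of $A(\mathcal{K}^n)$, use the explicit form of the neighbourhoods of $\infty$, and in particular observe that a one-point subset of $\mathcal{K}^n$ consisting of a closed point is compact and closed, so that its complement in $A(\mathcal{K}^n)$ is open --- this is exactly what prevents $\infty$ from entering the closure of an even tuple. The remaining computations, on the Khalimsky line and in finite products, are routine.
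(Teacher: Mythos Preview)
Your proof is correct and follows exactly the same strategy as the paper: invoke Theorem~\ref{salam30} and the Note to reduce the problem to identifying the closed points of each space, and then verify that these are $X\cap 2\mathbb{Z}$, $(2\mathbb{Z})^n$, and $(2\mathbb{Z})^n\cup\{\infty\}$ respectively. You supply more detail on this last identification than the paper does, but the architecture of the argument is identical.
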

\begin{proof}
If $X$ is an interval in $\mathcal{K}$ with at least two elements, then $X\cap2\mathbb{Z}$ is the collection of all closed points of $X$.
Also $(2\mathbb{Z})^n$ is the collection of all closed points of $\mathcal{K}^n$. Moreover 
$(2\mathbb{Z})^n\cup\{\infty\}$ is the collection of all closed points of $A(\mathcal{K}^n)$. Now use Theorem~\ref{salam30} and Note~\ref{salam40}
to complete  the proof.
\end{proof}
\section{A diagram}
\noindent In this section via a diagram we compare the collections of self--maps on $\mathcal{K}^n$ 
satisfying on of the following conditions:
\begin{itemize}
\item has closed graph,
\item is constant,
\item is continuous,
\item is quasi--continuous,
\item is just a self--map on $\mathcal{K}^n$.
\end{itemize}
Let's recall that in topological spaces $X,Y$, $f:X\to Y$ is quasi--continuous at $a\in X$ if for each open neighbourhoods $U$ of $a$ and $V$ of $f(a)$ there exists nonempty open subset $W$  of $U$
such that $f(W)\subseteq V$. Also we say $f:X\to Y$ is quasi--continuous if it is quasi--continuous at all pints of$X$~\cite{shir, hola}.
\\
Now we have the following diagram:
\vspace*{5mm}
{\small
\begin{center}
\begin{tabular}{|c|}
\hline \\
The collection of all self--maps $f:\mathcal{K}^n\to\mathcal{K}^n$ \\
	$\SP$ \begin{tabular}{|c|} \hline  \\
	The collection of all quasi--continuous maps $f:\mathcal{K}^n\to\mathcal{K}^n$ \\
		$\SP$ \begin{tabular}{|c|} \hline  \\
		The collection of all continuous maps $f:\mathcal{K}^n\to\mathcal{K}^n$ \\
			$\SP$ \begin{tabular}{|c|} \hline  \\
			The collection of all constant maps $f:\mathcal{K}^n\to\mathcal{K}^n$ \\
				$\SP$ \begin{tabular}{|c|} \hline  \\
				The collection of all maps $f:\mathcal{K}^n\to\mathcal{K}^n$ with closed graph
				\\ Example~\ref{A}
				\\ \hline \end{tabular} $\SP$ \\
			Example~\ref{B}
			\\ \hline \end{tabular} $\SP$ \\
		Example~\ref{C}
		\\ \hline \end{tabular} $\SP$ \\
	Example~\ref{D}
	\\ \hline \end{tabular} $\SP$ \\
Example~\ref{E}
\\ \hline 
\end{tabular}
\vspace{5mm}
\end{center}}
\noindent In order to complete the above diagram consider the following examples which have been referred in the diagram.
\begin{example}\label{A}
Consider $f:\mathcal{K}^n\to\mathcal{K}^n$  with $f(x)=(0,\cdots,0)$ for all $x\in\mathcal{K}^n$, then by Theorem~\ref{salam40},
$f:\mathcal{K}^n\to\mathcal{K}^n$ has closed graph. Note that by Theorem~\ref{salam40} all self--maps on $\mathcal{K}^n$ with closed graph
are constant maps.
\end{example}
\begin{example}\label{B}
Consider constant map $f:\mathcal{K}^n\to\mathcal{K}^n$  with $f(x)=(1,\cdots,1)$ for all $x\in\mathcal{K}^n$
then by Theorem~\ref{salam40},
$f:\mathcal{K}^n\to\mathcal{K}^n$ does not have closed graph. Note that all constant maps are continuous.
\end{example}
\begin{example}\label{C}
$f:\mathcal{K}^n\to\mathcal{K}^n$  with $f(x)=x$ for all $x\in\mathcal{K}^n$ is a continuous non--constant map. Note that all
continuous maps are quasi--continuous.
\end{example}
\begin{example}\label{D}
Consider $h:\mathcal{K}\to\mathcal{K}$ with $h(2m)=h(2m+1)=2m+1$ for each $m\in\mathbb{}$, then 
$f:\mathcal{K}^n\to\mathcal{K}^n$  with $f(x_1,\cdots,x_n)=(h(x_1),\cdots,h(x_n))$ for all $(x_1,\cdots,x_n)\in\mathcal{K}^n$
is a quasi--continuous map which is not continuous.
\end{example}
\begin{example}\label{E}
Consider $h:\mathcal{K}\to\mathcal{K}$ with $h(2m)=-h(2m+1)=1$ for each $m\in\mathbb{}$, then 
$f:\mathcal{K}^n\to\mathcal{K}^n$  with $f(x_1,\cdots,x_n)=(h(x_1),\cdots,h(x_n))$ for all $(x_1,\cdots,x_n)\in\mathcal{K}^n$
is a is not quasi--continuous.
\end{example}
\section*{Acknowledgment} 
\noindent The authors wish to dedicate this paper to our lady,
Fatimah-Zahra (as).

\noindent \noindent {\small {\bf Mehrnaz Pourattar} Department of Mathematics, Science and Research Branch, Islamic Azad University, Tehran, Iran
 (mpourattar@yahoo.com)}
\\
{\small {\bf Fatemah Ayatollah Zadeh Shirazi}, Faculty
of Mathematics, Statistics and Computer Science, College of
Science, University of Tehran, Tehran, Iran
 (f.a.z.shirazi@ut.ac.ir)}
\\
{\small {\bf Mohammad Reza Mardanbeigi}, Department of Mathematics, Science and Research Branch, Islamic Azad University, Tehran, Iran
 (mrmardanbeigi@srbiau.ac.ir)}

\end{document}